 \newtheorem{theorem}{Theorem}
\newtheorem{corollary}{Corollary}
 \DeclareMathOperator{\expo}{e}
\DeclareMathOperator{\Id}{Id}
\begin{document}

\title{\textbf{Can classical Schwarz methods for time-harmonic elastic waves converge?
}} \author[1]{Romain Brunet \thanks{E-mail: romain.brunet@strath.ac.uk}}
\author[1,3]{Victorita Dolean \thanks{E-mail: victorita.dolean@strath.ac.uk}}
\author[2]{Martin J. Gander \thanks{E-mail: martin.gander@unige.ch}}
\affil[1]{\small \textit{Department of Mathematics and Statistics, University of Strathclyde, 26 Richmond Street, G1 1XH Glasgow, United Kingdom}}
\affil[2]{\small \textit{Section des Math\'ematiques, Universit\'e de Gen\`eve, 2-4 rue du Li\`evre, CP 64, CH-1211 Gen\`eve, Switzerland}}
\affil[3]{\small \textit{University C\^ote d'Azur, CNRS, LJAD, 06108 Nice Cedex 02, France}}

\date{\today}

\maketitle

\abstract*{We show that applying a classical Schwarz method to the
  time harmonic Navier equations, which are an important model for
  linear elasticity, leads in general to a divergent method for low to
  intermediate frequencies. This is even worse than for Helmholtz and
  time harmonic Maxwell's equations, where the classical Schwarz
  method is also not convergent, but low frequencies only stagnate,
  they do not diverge. We illustrate the divergent modes by numerical
  examples, and also show that when using the classical Schwarz
  method as a preconditioner for a Krylov method, convergence
  difficulties remain.}

\section{Mathematical model}\label{sec:1}

The propagation of waves in elastic media is a problem of undeniable
practical importance in geophysics. In several important applications
- e.g. seismic exploration or earthquake prediction - one seeks to
infer unknown material properties of the earth's subsurface by sending
seismic waves down and measuring the scattered field which comes back,
implying the solution of inverse problems. In the process of solving
the inverse problem (the so-called "full-waveform inversion") one
needs to iteratively solve the forward scattering problem. In
practice, each step is done by solving the appropriate wave equation
using explicit time stepping. However in many applications the
relevant signals are band-limited and it would be more efficient to
solve in the frequency domain. For this reason we are interested here
in the time-harmonic counterpart of the Navier or Navier-Cauchy
equation (see \cite[Chapter 5.1]{eringen1977elastodynamics} or
\cite[Chapter 9]{lautrup2011physics}\footnote{For the fascinating
  history on how Navier discovered the equation and then rapidly
  turned his attention to fluid dynamics, see
  \cite{darrigol2002between}.}), which is a linear mathematical model
for elastic waves given by
\begin{equation}\label{NavierEq}
  -\left(\Delta^e+\omega^2\rho\right)\vec{u}=\vec{f}\quad \mbox{in
    $\Omega$},\quad
  \Delta^e\vec{u}=\mu\Delta\vec{u}+(\lambda+\mu)\nabla(\nabla\cdot\vec{u}),
\end{equation}
where $\mathbf{u}$ is the displacement field, $\mathbf{f}$ is the
source term, $\rho$ is the density that we assume real,
$\mu,\lambda\in [\mathbb{R}_+^*]^2$ are the Lam\'e coefficients,
and $\omega$ is the time-harmonic frequency for which we are
  interested in the solution. An example of a discretization of this
equation was presented in \cite{huttunen2004ultra}.  In our case, we
assumed small deformations which lead to linear equations and consider
isotropic and homogeneous materials which implies that the physical
coefficients are independent of the position and the
direction. Due to their indefinite nature, the Navier equations in
  the frequency domain \eqref{NavierEq} are notoriously difficult to
  solve by iterative methods, especially if the frequency $\omega$
  becomes large, similar to the Helmholtz equation
  \cite{ernst2012difficult}, and there are further complications as
we will see. We study here if the classical Schwarz method could
  be a candidate for solving the time harmonic Navier equations
  \eqref{NavierEq} iteratively.

\section{Classical Schwarz Algorithm} \label{sec:2}

To understand the convergence of the classical Schwarz algorithm
applied to the time harmonic Navier equations \eqref{NavierEq}, we
study the equations on the domain $\Omega:={\mathbb R}^2$, and
decompose it into two overlapping subdomains
$\Omega_1:=(-\infty,\delta)\times{\mathbb R}$ and
$\Omega_2:=(0,\infty)\times{\mathbb R}$, with overlap parameter
$\delta \ge 0$. The classical parallel Schwarz algorithm computes
for iteration index $n=1,2,\ldots$
\begin{equation}\label{ClassicalSchwarz}
  \begin{array}{rcll}
    -\left(\Delta^e+\omega^2\rho\right)\vec{u}_1^n&=&\vec{f}& \mbox{in $\Omega_1$},\\
    \vec{u}_1^n&=& \vec{u}_2^{n-1}&\mbox{on $x=\delta$},\\
    -\left(\Delta^e+\omega^2\rho\right)\vec{u}_2^n&=&\vec{f}& \mbox{in $\Omega_2$},\\
    \vec{u}_2^n&=&\vec{u}_1^{n-1}&\mbox{on $x=0$}.
  \end{array}
\end{equation}
We now study the convergence of the classical parallel Schwarz
  method \eqref{ClassicalSchwarz} using a Fourier transform in the $y$
  direction. We denote by $k \in \mathbb{R}$ the Fourier variable and
  $\hat{\vec{u}}(x,k)$ the Fourier transformed solution,
\begin{equation}
  \begin{array}{rcccl}
    \hat{\vec{u}}(x,k)&=&\mathfrak{F}(\vec{u})&=&\displaystyle
    \int_{-\infty}^{\infty} \expo^{-\mathrm{i} k y} \vec{u}(x,y) \, \mathrm dy,\\
    \vec{u}(x,y) &=& \mathfrak{F}^{-1}(\hat{\vec{u}}) &=&\displaystyle
  \frac{1}{2\pi} \int_{-\infty}^{\infty} \expo^{\mathrm{i} k y}
  \hat{\vec{u}}(x,k) \, \mathrm dk.
  \end{array}
\end{equation}
\begin{theorem}[Convergence factor of the classical Schwarz algorithm]\label{th:convclas}
For a given initial guess $(\vec{u}_1^0 \in(L^2(\Omega_1)^2)$,
$(\vec{u}_2^0 \in(L^2(\Omega_2)^2)$, each Fourier mode $k$ in the
classical Schwarz algorithm \eqref{ClassicalSchwarz} converges with
the corresponding convergence factor
\begin{equation*}
  \rho_{cla}\left(k,\omega,C_p,C_s,\delta\right) = \max \{|r_+|,|r_-|\},
\end{equation*}
where
\begin{equation}\label{r+r-}
r_\pm = \frac{X^2}{2}+e^{-\delta(\lambda_1+\lambda_2)} \pm\frac{1}{2} \sqrt{X^2\left(X^2+4e^{-\delta(\lambda_1+\lambda_2)}\right)},\,
  X = \frac{k^2+\lambda_1\lambda_2}{k^2-\lambda_1\lambda_2}\left(e^{-\lambda_1 \delta}-e^{-\lambda_2 \delta}\right).
\end{equation}
Here, $\lambda_{1,2} \in \mathbb{C}$ are the roots of the
characteristic equation of the Fourier transformed Navier equations,
\begin{equation}
  \lambda_1 = \sqrt{k^2-\frac{\omega^2}{C_s^2}},\quad 
  \lambda_2 = \sqrt{k^2-\frac{\omega^2}{C_p^2}},\quad
C_p=\sqrt{\frac{\lambda+2\mu}{\rho}}, \quad
C_s=\sqrt{\frac{\mu}{\rho}}.
  \label{lambda12}
\end{equation}
\end{theorem}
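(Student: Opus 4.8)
The plan is to linearise, Fourier transform in $y$, and reduce the whole method to a $2\times 2$ matrix iteration on the interface traces, whose spectral radius is the claimed convergence factor. First I would pass to the error equations: by linearity the errors $\vec{e}_i^n := \vec{u}_i^n - \vec{u}|_{\Omega_i}$ satisfy the homogeneous system $-(\Delta^e+\omega^2\rho)\vec{e}_i^n = 0$ on $\Omega_i$ together with the Dirichlet transmission conditions inherited from \eqref{ClassicalSchwarz}. Writing $\hat{\vec{e}} = (\hat u,\hat v)^\top$ and replacing $\partial_y$ by $\mathrm{i}k$, each subdomain problem becomes a constant-coefficient $2\times 2$ ODE system in $x$ with $k$ as a parameter. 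Seeking exponential solutions $\propto e^{sx}$ reduces it to the vanishing of a $2\times2$ determinant, i.e.\ a biquadratic in $s$. Using $C_p^2 = (\lambda+2\mu)/\rho$, $C_s^2 = \mu/\rho$ and $\lambda+\mu = \rho(C_p^2-C_s^2)$, this determinant factors, after the substitution $u = s^2 - k^2$, as $\rho^2 C_p^2 C_s^2\,(s^2-\lambda_1^2)(s^2-\lambda_2^2)$, so the characteristic exponents are exactly $\pm\lambda_1,\pm\lambda_2$ from \eqref{lambda12}; this is the Fourier counterpart of the Helmholtz decomposition into S- and P-waves.

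Next I would build the bounded general solution on each half-plane, keeping only the two exponentials that decay at infinity, namely $e^{\lambda_1 x}, e^{\lambda_2 x}$ on $\Omega_1$ and $e^{-\lambda_1 x}, e^{-\lambda_2 x}$ on $\Omega_2$, each carried by its eigenvector. A short computation gives these eigenvectors proportional to $(\mathrm{i}k,-\lambda_1)^\top$ and $(\mathrm{i}\lambda_2,-k)^\top$ on $\Omega_1$, and to $(\mathrm{i}k,\lambda_1)^\top$ and $(\mathrm{i}\lambda_2,k)^\top$ on $\Omega_2$. Imposing the transmission condition at $x=\delta$ determines the two coefficients on $\Omega_1$ from the interface data $\hat{\vec{e}}_2^{n-1}(\delta)$, and evaluating the resulting solution at $x=0$ yields a linear map $M_1 = V D V^{-1}$ to $\hat{\vec{e}}_1^n(0)$, with $D = \mathrm{diag}(e^{-\lambda_1\delta},e^{-\lambda_2\delta})$ and $V$ the matrix of $\Omega_1$-eigenvectors; symmetrically one obtains $M_2 = W D W^{-1}$ mapping $\hat{\vec{e}}_1^{n-1}(0)$ to $\hat{\vec{e}}_2^n(\delta)$.

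Finally I would assemble the iteration. The pair of interface traces is propagated by the off-diagonal operator built from $M_1$ and $M_2$, so the relevant error-propagation operator over one full cycle is the product $M_1 M_2$, and $\rho_{cla}$ is its spectral radius. Here $\det(M_1M_2) = (\det D)^2 = e^{-2\delta(\lambda_1+\lambda_2)}$ is immediate, while the trace collapses, after using $(k^2-\lambda_1\lambda_2)^2 - (k^2+\lambda_1\lambda_2)^2 = -4k^2\lambda_1\lambda_2$, to $\mathrm{tr}(M_1M_2) = X^2 + 2e^{-\delta(\lambda_1+\lambda_2)}$ with $X$ as in \eqref{r+r-}. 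Since the eigenvalues of a $2\times2$ matrix with this trace and determinant are precisely $r_\pm$, this gives $\rho_{cla} = \max\{|r_+|,|r_-|\}$.

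The trace/determinant algebra, once the eigenvectors are in hand, is routine. The main obstacle I expect is analytic rather than algebraic: fixing the correct branch of the square roots $\lambda_{1,2}$ in the propagative regime $k^2 < \omega^2/C_{s,p}^2$, where boundedness at infinity must be replaced by a limiting-absorption (radiation) condition to select the decaying modes; and handling the degenerate mode $k^2 = \lambda_1\lambda_2$, i.e.\ $k^2 = \omega^2/(C_p^2+C_s^2)$, at which the two eigenvectors coalesce, $V$ and $W$ become singular and $X$ is formally infinite. There the decaying modes no longer span arbitrary interface data, so the reduction to $M_1,M_2$ breaks down pointwise and must be justified by a continuity or limiting argument; however, this exceptional set is of measure zero in the Fourier variable and does not affect the formula for $\rho_{cla}$.
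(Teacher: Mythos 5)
Your proposal is correct and follows essentially the same route as the paper: Fourier transform in $y$, characteristic roots $\pm\lambda_1,\pm\lambda_2$, selection of the bounded/outgoing exponentials on each half-plane, and reduction to a $2\times 2$ iteration over a double Schwarz step whose spectral radius gives $\rho_{cla}$; your operator $M_1M_2=VDV^{-1}WDW^{-1}$ acting on interface traces is similar (conjugate by $V$) to the paper's iteration matrix $R_\delta^1=M_\delta^{-1}N_\delta N_0^{-1}M_0$ acting on coefficient vectors, so they have the same eigenvalues. The only difference is cosmetic: you extract $r_\pm$ from the trace and determinant of $M_1M_2$ rather than computing the full matrix explicitly and then its eigenvalues as the paper does, and your attention to the branch choice and the degenerate set $k^2=\lambda_1\lambda_2$ (where $X$ blows up) is a sound precaution that the paper passes over silently.
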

\begin{proof}
By linearity it suffices to consider only the case $\vec{f}=0$ and
analyze convergence to the zero solution, see for example
\cite{gander2006optimized}. After a Fourier transform in the $y$
direction, \eqref{NavierEq} becomes
\begin{equation}\label{eq:Fourier}
\left\lbrace
\begin{aligned}
&\left[\left(\lambda+2\mu\right)\partial_x^2 + \left(\rho \omega^2-\mu k^2\right)\right] \hat{u}_x + ik(\mu + \lambda)\partial_x \hat{u}_z = 0,\\ 
	&\left[\mu \partial_x^2 + \left(\rho \omega^2-\left(\lambda + 2\mu\right) k^2\right)\right] \hat{u}_z + ik\left(\mu+\lambda\right) \partial_{x} \hat{u}_x  = 0.
\end{aligned}
\right.
\end{equation}
This is a system of ordinary differential equations, whose solution is
obtained by computing the roots $r$ of its characteristic equation,
\begin{equation}\label{eq:caract}
\left[\begin{array}{cc} 
(\lambda+2\mu)r^2 + \rho\omega^2-\mu k^2 & ik(\mu+\lambda)r \\
ik(\mu+\lambda)r & \mu r^2 + \rho\omega^2-(\lambda+2\mu) k^2 
\end{array}\right] \left[\begin{array}{c} \hat u_x \\ \hat u_z \end{array}\right]=0.
\end{equation}
A direct computation shows that these roots are $\pm\lambda_1$ and
$\pm\lambda_2$ where $\lambda_{1,2}$ are given by \eqref{lambda12}.
Therefore the general form of the solution is
\begin{equation}
\hat {\bf u} (x,k) = \alpha_1 {\bf v}_+ e^{\lambda_1 x} +\beta_1 {\bf v}_- e^{-\lambda_1 x} + \alpha_2 {\bf w}_+ e^{\lambda_2 x} + \beta_2 {\bf w}_- e^{-\lambda_2 x},
\end{equation}
where ${\bf v}_{\pm}$ and ${\bf w}_{\pm}$ are obtained by successively
inserting these roots into \eqref{eq:caract} and computing a
  non-trivial solution,
\begin{equation}
\vec{v}+ = \begin{pmatrix} 1\\ \frac{\mathrm{i} \lambda_1}{k} \end{pmatrix},\quad{\bf v}_- = \begin{pmatrix} 1\\ -\frac{\mathrm{i}\lambda_1}{k} \end{pmatrix} ,\quad{\bf w}_+ = \begin{pmatrix} - \frac{\mathrm{i} \lambda_2}{k}\\1 \end{pmatrix},\quad{\bf w}_- = \begin{pmatrix} \frac{\mathrm{i} \lambda_2}{k}\\1 \end{pmatrix}.
\end{equation}
Because the local solutions must remain bounded and outgoing at
infinity, the subdomain solutions in the Fourier transformed domain
are
\begin{equation}\label{eq:locsol}
\vec{\hat{u}}_1 (x,k) = \alpha_1 {\bf v}_+ e^{\lambda_1 x} + \alpha_2 {\bf w}_+ e^{\lambda_2 x},\qquad
\vec{\hat{u}}_2 (x,k) = \beta_1 {\bf v}_- e^{-\lambda_1 x} + \beta_2 {\bf w}_- e^{-\lambda_2 x}.
\end{equation}
The coefficients $\alpha_{1,2}$ and $\beta_{1,2}$ are then uniquely
determined by the transmission conditions. Before using the iteration
to determine them, we rewrite the local solutions at iteration $n$
in the form
\begin{equation}
\begin{aligned}
\vec{\hat{u}}_1^n &= \alpha_1^n {\bf v}_+ e^{\lambda_1 x} + \alpha_2^n {\bf w}_+ e^{\lambda_2 x}
= \begin{bmatrix}
\expo^{\lambda_1 x} &  -\frac{i\lambda_2}{k}\expo^{\lambda_2 x} \\
\frac{i\lambda_1}{k}\expo^{\lambda_1 x} & \expo^{\lambda_2 x}
\end{bmatrix}
\begin{pmatrix}
\alpha_1^n \\
\alpha_2^n
\end{pmatrix}
=: M_x \boldsymbol{\alpha}^n, \\
\vec{\hat{u}}_2^n &= \beta_1^n {\bf v}_- e^{-\lambda_1 x} + \beta_2^n {\bf w}_- e^{-\lambda_2 x}
= \begin{bmatrix}
\expo^{-\lambda_1 x} & \frac{i\lambda_2}{k} \expo^{-\lambda_2 x} \\
- \frac{i\lambda_1}{k} \expo^{-\lambda_1 x} & \expo^{-\lambda_2 x}
\end{bmatrix}
\begin{pmatrix}
\beta_1^n \\
\beta_2^n
\end{pmatrix}
=: N_x \boldsymbol{\beta}^n.
\end{aligned}
\label{formefinaleu}
\end{equation}
We then insert (\ref{formefinaleu}) into the interface iteration
of the classical Schwarz algorithm (\ref{ClassicalSchwarz}),
\begin{equation}
\begin{aligned}
  &M_{\delta} \boldsymbol{\alpha}^{n} = N_{\delta} \boldsymbol{\beta}^{n-1}
  \quad\Longleftrightarrow\quad \boldsymbol{\alpha}^{n} = M^{-1}_{\delta} N_{\delta} \boldsymbol{\beta}^{n-1},  \\
  &N_0 \boldsymbol{\beta}^n = M_0 \boldsymbol{\alpha}^{n-1}
  \quad\Longleftrightarrow\quad \boldsymbol{\beta}^n = N^{-1}_0 M_0 \boldsymbol{\alpha}^{n-1}.
\end{aligned}
\label{pluginter}
\end{equation}
This leads over a double iteration to
\begin{equation}
\begin{aligned}
& \boldsymbol{\alpha}^{n+1} = (M^{-1}_{\delta}N_{\delta}N^{-1}_0 M_0) \boldsymbol{\alpha}^{n-1} =: R_\delta^1 \boldsymbol{\alpha}^{n-1},\\
&\boldsymbol{\beta}^{n+1} = (N^{-1}_0 M_0 M^{-1}_{\delta}N_{\delta}) \boldsymbol{\beta}^{n-1} =: R_\delta^2 \boldsymbol{\beta}^{n-1},
\end{aligned}
\label{SpectralProof}
\end{equation}
where $R_\delta^{1,2}$ are the iteration matrices which are spectrally
equivalent. The iteration matrix $R_\delta^1$ is given by
\begin{equation}
R_\delta^1 = \begin{bmatrix}
\displaystyle\expo^{- \delta(\lambda_1 +\lambda_2)} X_{2}^2 \frac{\lambda_1}{\lambda_2} + \expo^{-2\lambda_1 \delta} X_{1}^2 & \displaystyle X_{1} X_{2} \left(\expo^{-2 \lambda_1 \delta}-\expo^{-\delta (\lambda_1+\lambda_2)}\right) \\
\displaystyle X_{1} X_{2} \frac{\lambda_1}{\lambda_2} \left(\expo^{-\delta (\lambda_1 + \lambda_2)} - \expo^{-2 \lambda_2 L}\right) & \displaystyle\expo^{-\delta (\lambda_1 + \lambda_2)} X_{2}^2 \frac{\lambda_1}{\lambda_2} + \expo^{-2 \lambda_2 \delta} X_{1}^2
\end{bmatrix},
\label{RL1}
\end{equation}
where
\begin{equation}
X_{1} = \dfrac{k^2+\lambda_1 \lambda_2}{k^2-\lambda_1 \lambda_2},
\quad
X_{2} = - \mathrm{i}\dfrac{2 k \lambda_2}{k^2-\lambda_1 \lambda_2}.
\end{equation}
A direct computation then leads to the eigenvalues $(r_+,r_-)$ of
$R_\delta^1$,
\begin{equation}\label{eq:r12}
r_\pm = \frac{X^2}{2}+\expo^{-\delta(\lambda_1+\lambda_2)} \pm \frac{1}{2}\sqrt{X^2\left(X^2+4e^{-\delta(\lambda_1+\lambda_2)}\right)}, \quad X = \dfrac{k^2+\lambda_1 \lambda_2}{k^2-\lambda_1 \lambda_2} \left(\expo^{-\lambda_1\delta}-\expo^{-\lambda_2\delta}\right).
\end{equation}
The convergence factor is therefore given by the spectral radius of
the matrix $R_\delta^{1,2}$,
\begin{equation}
\rho_{cla}\left(k,\omega,C_p,C_s,\delta\right) = \max \{|r_+|,|r_-|\},
\end{equation}
which concludes the proof.
\end{proof}
\begin{corollary}[Classical Schwarz without Overlap]\label{cor}
In the case without overlap, $\delta=0$, we obtain from
  \eqref{r+r-} that $r_{\pm}=1$, since
$\left(R_\delta^1=\Id\right)$.  Therefore, the classical Schwarz
algorithm is not convergent without overlap, it just stagnates.
\end{corollary}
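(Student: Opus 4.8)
The plan is to evaluate the general convergence factor of Theorem~\ref{th:convclas} at $\delta=0$ and to reconcile it with the parenthetical claim $R_0^1=\Id$. First I would note that the frequency-dependent quantity $X$ in \eqref{r+r-} carries the factor $e^{-\lambda_1\delta}-e^{-\lambda_2\delta}$, which vanishes identically at $\delta=0$ independently of the (complex) values of $\lambda_1,\lambda_2$. Hence $X=0$ when $\delta=0$. Substituting this together with $e^{-\delta(\lambda_1+\lambda_2)}=1$ into \eqref{r+r-} collapses the radical, since $X^2\left(X^2+4e^{-\delta(\lambda_1+\lambda_2)}\right)=0$, and leaves $r_\pm = 0+1\pm 0 = 1$. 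Thus both eigenvalues equal one and $\rho_{cla}=\max\{|r_+|,|r_-|\}=1$.

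Second, to justify the remark $R_0^1=\Id$ directly from \eqref{RL1}, I would set $\delta=0$ in the matrix entries. The two off-diagonal entries contain the factors $e^{-2\lambda_1\delta}-e^{-\delta(\lambda_1+\lambda_2)}$ and $e^{-\delta(\lambda_1+\lambda_2)}-e^{-2\lambda_2\delta}$, respectively, each of which is zero at $\delta=0$, so the off-diagonal entries vanish. Each diagonal entry then reduces to $X_2^2\frac{\lambda_1}{\lambda_2}+X_1^2$, and the single genuine computation of the proof is to verify that this equals one. Inserting the definitions of $X_1$ and $X_2$ and clearing the common denominator $(k^2-\lambda_1\lambda_2)^2$, the numerator simplifies through the identity $(k^2+\lambda_1\lambda_2)^2-4k^2\lambda_1\lambda_2=(k^2-\lambda_1\lambda_2)^2$, so both diagonal entries equal one. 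Therefore $R_0^1=\Id$, whose eigenvalues are trivially $r_\pm=1$, in agreement with the first computation.

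Finally I would interpret the outcome: since $\rho_{cla}=1$ for every Fourier mode $k$, no mode is amplified (so the method does not diverge) and none is damped (so it does not converge); the error is exactly reproduced over each double iteration \eqref{SpectralProof}, which is precisely stagnation. The main point to watch is the apparent singularity of the prefactor $\tfrac{k^2+\lambda_1\lambda_2}{k^2-\lambda_1\lambda_2}$ appearing in $X$, $X_1$ and $X_2$ when $k^2=\lambda_1\lambda_2$; this is why I would anchor the argument on the exact vanishing of $e^{-\lambda_1\delta}-e^{-\lambda_2\delta}$ at $\delta=0$ and on the rational identity of the second step, whose simplification is a removable singularity. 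In this way the conclusion $\rho_{cla}\equiv 1$ holds uniformly in $k$ without invoking any limiting argument.
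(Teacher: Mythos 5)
Your proposal is correct and follows essentially the same route as the paper, which simply evaluates \eqref{r+r-} at $\delta=0$ and asserts $R_0^1=\Id$ without further detail. You additionally verify that assertion explicitly---the off-diagonal entries of \eqref{RL1} vanish and the diagonal entries reduce to $1$ via $(k^2+\lambda_1\lambda_2)^2-4k^2\lambda_1\lambda_2=(k^2-\lambda_1\lambda_2)^2$---which is a welcome filling-in of the algebra the paper leaves implicit.
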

The result in Corollary \ref{cor} is consistent with the general
  experience that Schwarz methods without overlap do not converge, but
  there are important exceptions, for example for hyperbolic problems
  \cite{Dolean2008Why}, and also optimized Schwarz methods can converge
  without overlap \cite{gander2006optimized}. Unfortunately also with
  overlap, the Schwarz method has difficulties with the time harmonic
  Navier equations \eqref{NavierEq}:
\begin{corollary}[Classical Schwarz with Overlap]
\label{ClassicalSchwarzTheorem}
The convergence factor of the overlapping classical Schwarz method
(\ref{ClassicalSchwarz}) with overlap $\delta$ applied to the Navier equations
(\ref{NavierEq}) verifies for $\delta$ small enough
$$
\rho_{cla}\left(k,\omega,C_p,C_s,\delta\right) 
\left\{\begin{array}{l}
= 1,\, k\in\left[0,\frac{\omega}{C_p}\right]\cup\left\{\frac{\omega}{C_s}\right\}, \\ 
> 1,\, k\in\left(\frac{\omega}{C_p},\frac{\omega}{C_s}\right), \\ 
< 1,\, k\in\left(\frac{\omega}{C_s},\infty\right).
\end{array}\right.
$$
It thus converges only for high frequencies, diverges for
  medium frequencies, and stagnates for low frequencies.
\end{corollary}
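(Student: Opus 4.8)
The plan is to work directly from the characterization $\rho_{cla}=\max\{|r_+|,|r_-|\}$ in Theorem~\ref{th:convclas}, reading $r_\pm$ as the two roots of the quadratic $t^2-(X^2+2A)t+A^2=0$, where I abbreviate $A:=e^{-\delta(\lambda_1+\lambda_2)}$. The two elementary symmetric functions I will lean on are $r_++r_-=X^2+2A$ and, after the discriminant cancels, $r_+r_-=A^2$. Dividing the quadratic by $A$ and setting $\tau_\pm:=r_\pm/A$ turns these into $\tau_+\tau_-=1$ and $\tau_++\tau_-=2+X^2/A$, so that $|r_\pm|=|A|\,|\tau_\pm|$ and everything reduces to locating the reciprocal pair $\tau_\pm$ relative to the unit circle. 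Since $\lambda,\mu>0$ force $C_p>C_s$, the half-line $k\ge 0$ splits at the thresholds $\omega/C_p<\omega/C_s$ into the three announced regimes according to whether $\lambda_1,\lambda_2$ from \eqref{lambda12} are real or purely imaginary, and I would treat them one at a time.

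For the low band $k\in[0,\omega/C_p]$ both $\lambda_1=\mathrm{i}\mu_1$ and $\lambda_2=\mathrm{i}\mu_2$ are purely imaginary, so $\lambda_1+\lambda_2$ is imaginary and $|A|=1$. The crucial computation is that $X^2/A$ is real and lies in $[-4,0]$: factoring $e^{-\lambda_1\delta}-e^{-\lambda_2\delta}=-2\mathrm{i}\,e^{-\mathrm{i}(\mu_1+\mu_2)\delta/2}\sin\frac{(\mu_1-\mu_2)\delta}{2}$ makes the phase of $X^2$ cancel exactly against that of $A$, leaving $X^2/A=-4\bigl(\tfrac{k^2-\mu_1\mu_2}{k^2+\mu_1\mu_2}\bigr)^2\sin^2\frac{(\mu_1-\mu_2)\delta}{2}$, which is nonpositive and, because $\bigl|\tfrac{k^2-\mu_1\mu_2}{k^2+\mu_1\mu_2}\bigr|\le 1$, bounded below by $-4$. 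Hence $\tau_++\tau_-\in[-2,2]$ is real, the reciprocal roots $\tau_\pm$ sit on the unit circle, and $|r_\pm|=|A|=1$, so $\rho_{cla}=1$. This argument is in fact independent of $\delta$; the right endpoint $k=\omega/C_p$, where $\lambda_2=0$, is the limiting case in which the prefactor equals $1$.

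The two open intervals I would attack by a Taylor expansion in the overlap, which is exactly where \emph{$\delta$ small enough} enters. Writing $A=1-\delta(\lambda_1+\lambda_2)+O(\delta^2)$ and $X=p\,(\lambda_2-\lambda_1)\,\delta+O(\delta^2)$ with $p:=\tfrac{k^2+\lambda_1\lambda_2}{k^2-\lambda_1\lambda_2}$, and using $r_\pm=A\pm\sqrt{A}\,X+O(\delta^2)$, I get $r_\pm=1-\delta Z_\pm+O(\delta^2)$ with $Z_\pm=(\lambda_1+\lambda_2)\mp p(\lambda_2-\lambda_1)$, so that $|r_\pm|^2=1-2\delta\,\Re Z_\pm+O(\delta^2)$ and the sign of $\Re Z_\pm$ decides the outcome for small $\delta$. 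In the medium band $k\in(\omega/C_p,\omega/C_s)$ one has $\lambda_2>0$ real and $\lambda_1=\mathrm{i}\mu_1$, so $p$ is the quotient of a complex number by its conjugate and has modulus $1$; a short computation of $\Re\bigl(p(\lambda_2-\lambda_1)\bigr)$ reduces $\Re Z_+<0$ to $2\omega^2/C_p^2>0$, whence $|r_+|>1$ and $\rho_{cla}>1$. In the high band $k\in(\omega/C_s,\infty)$ all quantities are real and positive, the discriminant is nonnegative so $r_\pm$ are real with $r_+\ge r_->0$, and the inequality $r_+<1$, i.e. $(\lambda_1+\lambda_2)>p(\lambda_2-\lambda_1)$, collapses after clearing denominators to $2\lambda_1\,\omega^2/C_p^2>0$; thus $\rho_{cla}=r_+<1$.

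The step I expect to be the genuine obstacle is the remaining threshold $k=\omega/C_s$, where $\lambda_1=0$. Here the low-band mechanism breaks down: $A=e^{-\delta\lambda_2}\in(0,1)$ and $X^2/A>0$, so $\tau_\pm$ are real and reciprocal with $\tau_+>1$, which at first sight threatens $\rho_{cla}>1$, yet the claim is $\rho_{cla}=1$. I would resolve this not asymptotically but by an exact evaluation: setting $E:=e^{-\lambda_2\delta}$ gives $X=1-E$, $A=E$ and the fortunate simplification $X^2+4A=(1+E)^2$, so the square root is exactly $1-E^2$ and $r_+=1$, $r_-=E^2$ for every $\delta$. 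This clean cancellation, together with careful bookkeeping of the branch of the square root and of the complex phases of $X$ and $A$ in the first two bands, is the delicate part; the three interior regimes are then routine once the trace/determinant identities and the modulus-one structure of $A$ (low band) and of $p$ (medium band) are in hand.
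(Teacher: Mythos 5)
Your proof is correct, but it necessarily takes a different route from the paper, because the paper gives no proof of this corollary at all: it declares the argument ``too long and technical'' and defers it to the companion work \cite{Brunet:2018:OSM} and the thesis \cite{Brunet:2018}, offering only the numerical illustration in Figure~\ref{Fig1}. Your argument is self-contained and its key steps check out against Theorem~\ref{th:convclas}. The symmetric functions of the pair \eqref{r+r-} are indeed $r_++r_-=X^2+2A$ and $r_+r_-=A^2$ with $A=e^{-\delta(\lambda_1+\lambda_2)}$, which makes the analysis independent of the branch of the square root. In the band $k\in[0,\omega/C_p]$, writing $\lambda_j=\mathrm{i}\mu_j$, the phase cancellation $X^2/A=-4\bigl(\tfrac{k^2-\mu_1\mu_2}{k^2+\mu_1\mu_2}\bigr)^2\sin^2\tfrac{(\mu_1-\mu_2)\delta}{2}\in[-4,0]$ is exact, so $\tau_\pm=r_\pm/A$ form a conjugate reciprocal pair on the unit circle and $\rho_{cla}=1$ for \emph{every} $\delta$, not just small $\delta$; at $k=\omega/C_s$ the identity $X^2+4A=(1+E)^2$ with $E=e^{-\lambda_2\delta}$ gives exactly $r_+=1$, $r_-=E^2$, again for every $\delta$; and in the two open bands the expansion $|r_\pm|=1-\delta\,\Re Z_\pm+\mathcal{O}(\delta^2)$ with $Z_\pm=(\lambda_1+\lambda_2)\mp p(\lambda_2-\lambda_1)$ is sound: I verified $\Re Z_+=-\tfrac{2\mu_1^2\lambda_2\,\omega^2/C_p^2}{k^4+\mu_1^2\lambda_2^2}<0$ on $(\omega/C_p,\omega/C_s)$ --- which in fact reproduces the paper's own first-order formula \eqref{RhoMaxAsympt} in Theorem~\ref{ClassicalSchwarzAsymptotics}, once $\bar\lambda_1$ there is read as $|\lambda_1|=\mu_1$ --- and in the high band the reduction $(\lambda_1+\lambda_2)(k^2-\lambda_1\lambda_2)-(k^2+\lambda_1\lambda_2)(\lambda_2-\lambda_1)=2\lambda_1\omega^2/C_p^2>0$ follows in one line from $k^2-\lambda_2^2=\omega^2/C_p^2$. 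What your approach buys is an actual proof inside the manuscript, plus a sharper statement: the equalities $\rho_{cla}=1$ on $[0,\omega/C_p]\cup\{\omega/C_s\}$ need no smallness of the overlap, only the two strict inequalities do. The one caveat worth flagging is that your Taylor argument is pointwise in $k$: the admissible overlap $\delta_0(k)$ degenerates as $k$ approaches the band edges $\omega/C_p$ and $\omega/C_s$, where $\Re Z_+\to 0$, so you establish the strict inequalities for each fixed $k$ and $\delta$ small enough, not uniformly in $k$ for one fixed $\delta$; this modewise reading is the natural interpretation of the corollary, but you should say so explicitly.
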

\begin{proof}
The proof is too long and technical for this short manuscript
    and will appear in \cite{Brunet:2018:OSM}, see also
    \cite{Brunet:2018}. We illustrate however the result for two 
    examples in Figure \ref{Fig1}, which clearly shows the three
    zones of different convergence behavior.
\begin{figure}[t]
  \centering
  \includegraphics[width=0.49\textwidth,clip]{./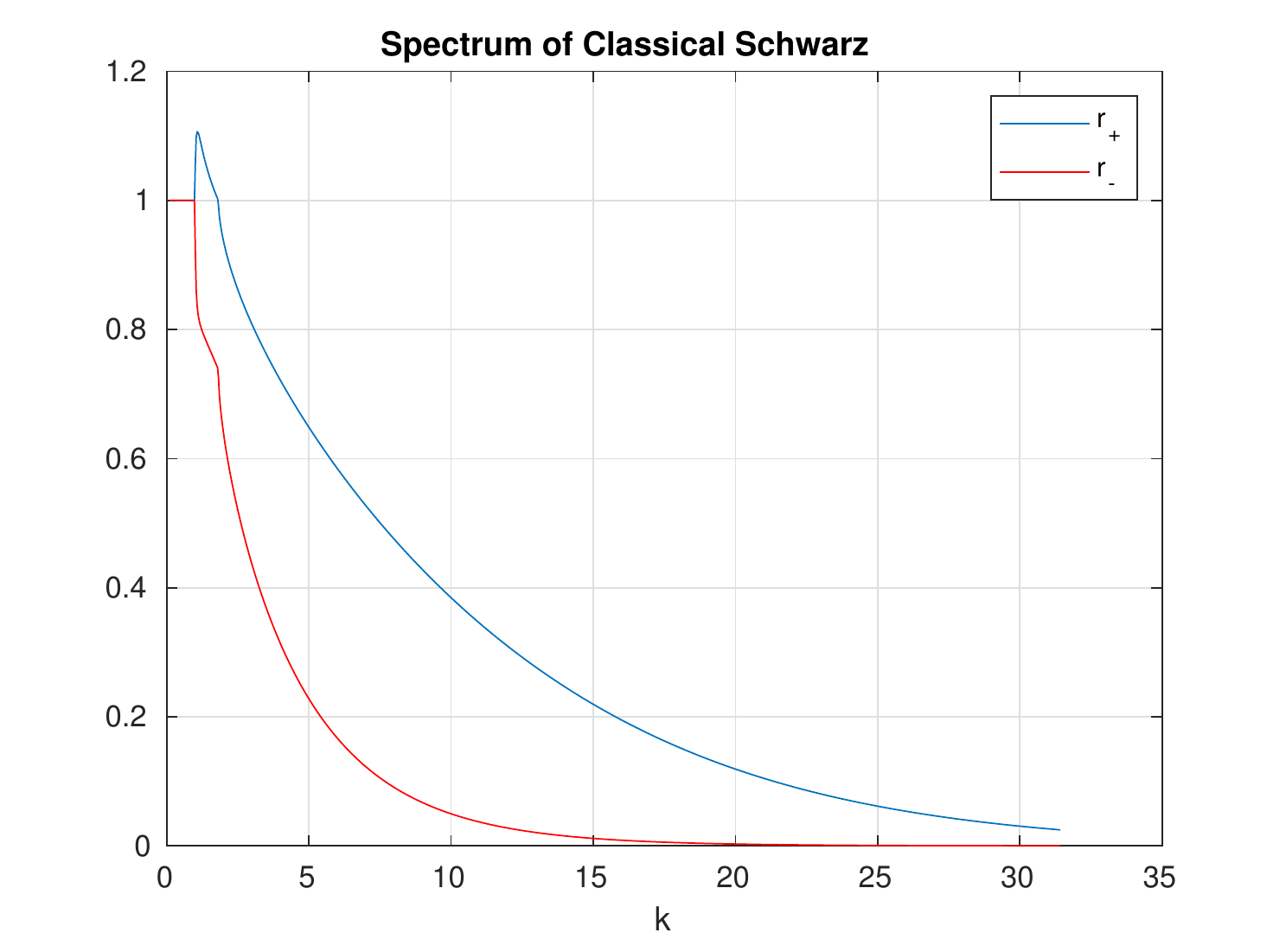}
  \includegraphics[width=0.49\textwidth,clip]{./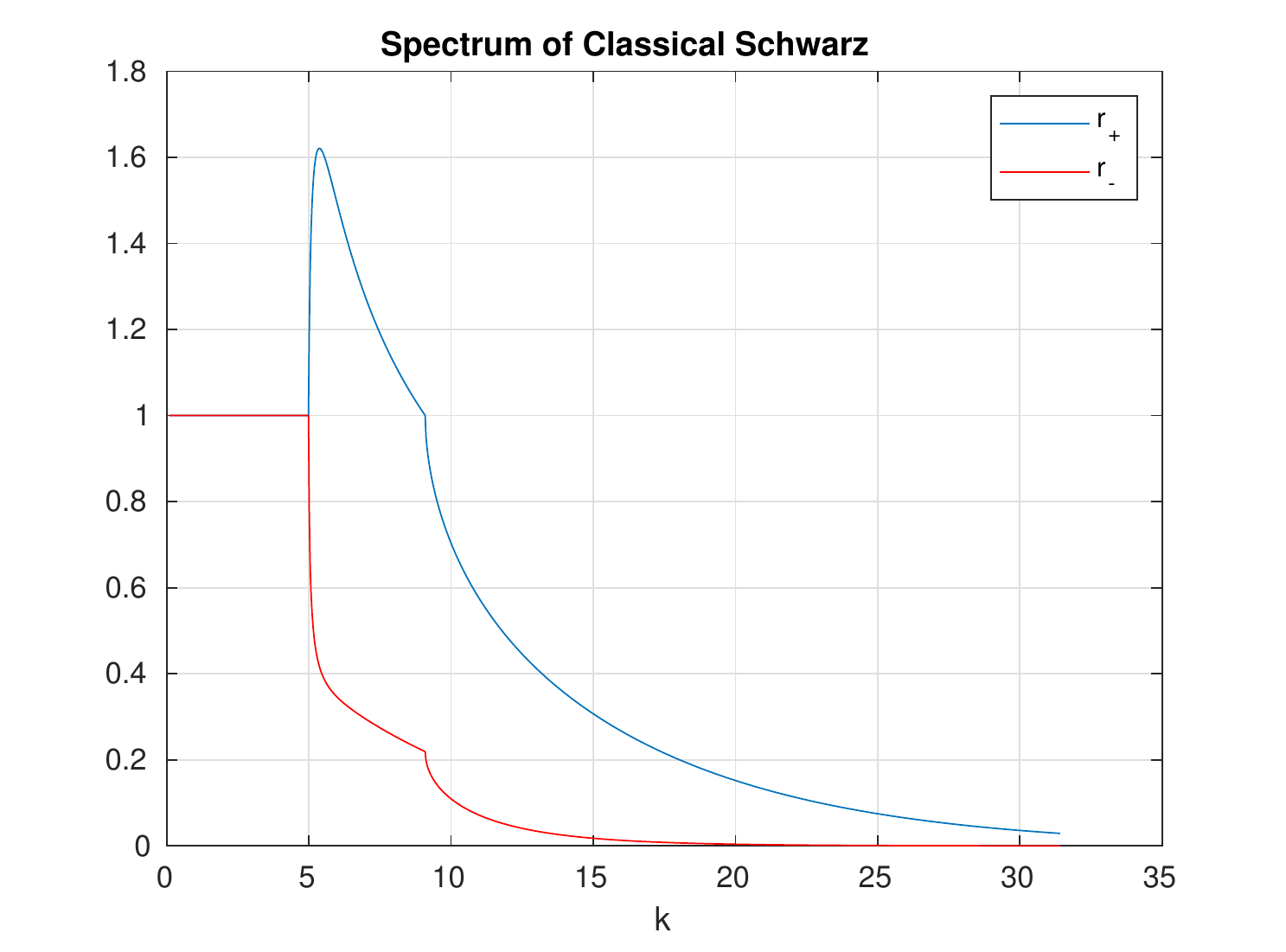}
  \caption{Modulus of the eigenvalues of the iteration matrix as a
      function of Fourier frequency for the classical Schwarz method
    with $C_p=1$, $C_s=0.5$, $\rho=1$, $\delta=0.1$. Left: for
    $\omega=1$. Right: for $\omega=5$.}
  \label{Fig1}
\end{figure}
\end{proof}
We see from Corollary \ref{ClassicalSchwarzTheorem} that the
classical Schwarz method with overlap can not be used as an iterative
solver to solve the time harmonic Navier equations, since it is
in general divergent on the whole interval of intermediate
  frequencies
  $(\frac{\omega}{C_p},\frac{\omega}{Cs})$. This is even
  worse than for the Helmholtz or Maxwell's equations where the
overlapping classical Schwarz algorithm is also convergent
  for high frequencies, and only stagnates for low frequencies, 
  but is never divergent.

A precise estimate of how fast the classical Schwarz method applied
to the time harmonic Navier equations diverges
depending on the overlap is given by the following theorem:
\begin{theorem}[Asymptotic convergence factor]\label{ClassicalSchwarzAsymptotics}
The maximum of the convergence factor of the classical Schwarz method
(\ref{ClassicalSchwarz}) applied to the Navier equations
(\ref{NavierEq}) behaves for small overlap $\delta$ asymptotically as
$$
\underset{k}{\max}( \max|r_\pm|) = \textstyle 1 + \dfrac{ \sqrt{2}C_s \omega\left(3 C_p^2-\sqrt{C_p^4+8 C_s^4}\right) \sqrt{C_p^2 \sqrt{C_p^4+8 C_s^4} - C_p^4 - 2 C_s^4} }{ C_p (C_p^2+C_s^2)^{\frac{3}{2}} \left( \sqrt{C_p^4+8 C_s^4}-C_p^2\right)} \delta.
$$
\end{theorem}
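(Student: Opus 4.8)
The plan is to localize the maximization to the intermediate band $k\in\left(\frac{\omega}{C_p},\frac{\omega}{C_s}\right)$. By Corollary~\ref{ClassicalSchwarzTheorem} the convergence factor exceeds one there and is at most one elsewhere for $\delta$ small, so $\max_k(\max|r_\pm|)$ is attained inside this band. Throughout it $\lambda_1=\mathrm{i}\sqrt{\omega^2/C_s^2-k^2}$ is purely imaginary while $\lambda_2=\sqrt{k^2-\omega^2/C_p^2}$ is real and positive, and this is exactly what pushes $|r_\pm|$ above one.

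First I would obtain a clean small-$\delta$ expansion of the two eigenvalues. Rather than expand the surd in \eqref{eq:r12} directly, I would use the elementary invariants $r_+r_-=e^{-2\delta(\lambda_1+\lambda_2)}$ and $r_++r_-=X^2+2e^{-\delta(\lambda_1+\lambda_2)}$ read off from \eqref{eq:r12}. Writing $r_\pm=1+\sigma_\pm\delta+O(\delta^2)$ and inserting $X=\frac{k^2+\lambda_1\lambda_2}{k^2-\lambda_1\lambda_2}(\lambda_2-\lambda_1)\delta+O(\delta^2)$ and $e^{-\delta(\lambda_1+\lambda_2)}=1-(\lambda_1+\lambda_2)\delta+O(\delta^2)$ into the trace and determinant relations reduces the first-order coefficients to the roots of a scalar quadratic, giving $\sigma_\pm=-(\lambda_1+\lambda_2)\pm\frac{(k^2+\lambda_1\lambda_2)(\lambda_2-\lambda_1)}{k^2-\lambda_1\lambda_2}$. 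Since $|1+\sigma\delta|=1+\mathrm{Re}(\sigma)\delta+O(\delta^2)$, the divergent eigenvalue is the one with the larger real part, and a short computation using $\lambda_1^2=k^2-\omega^2/C_s^2$, $\lambda_2^2=k^2-\omega^2/C_p^2$ collapses $\mathrm{Re}(\sigma_+)$ into the real, positive profile
$$
g(k)=\frac{2\sqrt{k^2C_p^2-\omega^2}\,(\omega^2-k^2C_s^2)}{C_p\bigl(k^2(C_p^2+C_s^2)-\omega^2\bigr)},\qquad \max_k(\max|r_\pm|)=1+\max_k g(k)\,\delta+O(\delta^2).
$$

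Next I would maximize $g$. Setting $t=k^2$ and differentiating $\ln g$, the stationarity condition $\frac{a}{2(at-\omega^2)}-\frac{b}{\omega^2-bt}-\frac{a+b}{(a+b)t-\omega^2}=0$ (with $a=C_p^2$, $b=C_s^2$) clears to $b(a+b)t^2+(a-2b)\omega^2 t-\omega^4=0$, whose discriminant is exactly $a^2+8b^2=C_p^4+8C_s^4$; the admissible root is $t^\star=\tfrac{\omega^2}{2b(a+b)}\bigl[(2b-a)+\sqrt{a^2+8b^2}\bigr]$. Evaluating the three factors of $g$ at $t^\star$ yields the transparent forms $\omega^2-bt^\star=\omega^2\frac{3a-\Delta}{2(a+b)}$, $(a+b)t^\star-\omega^2=\omega^2\frac{\Delta-a}{2b}$ and $at^\star-\omega^2=\omega^2\frac{a\Delta-a^2-2b^2}{2b(a+b)}$ with $\Delta=\sqrt{a^2+8b^2}$; assembling them produces the stated coefficient of $\delta$ (the factor $\frac{2}{C_p}$ times $\frac{\sqrt 2}{2}$ gives the $\sqrt2\,C_s$ prefactor after $\sqrt{b}=C_s$).

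The main obstacle is this final evaluation: $g(t^\star)$ carries nested radicals that collapse only once one recognizes the conjugate identity $(a\Delta+a^2+2b^2)(a\Delta-a^2-2b^2)=a^2\Delta^2-(a^2+2b^2)^2=4b^2(a^2-b^2)$, which is what makes $\sqrt{at^\star-\omega^2}$ rationalize into the factor $\sqrt{C_p^2\sqrt{C_p^4+8C_s^4}-C_p^4-2C_s^4}$ in the numerator. Along the way I would record the sign checks that keep every radicand positive and the maximum genuinely divergent — $\Delta>a$, $3a>\Delta$, and $a\Delta>a^2+2b^2$ — each of which reduces to $C_p>C_s$. A secondary technical point is justifying that the leading-order expansion may be maximized term by term: since $g$ vanishes at both endpoints of the band and attains a positive interior maximum, for $\delta$ small enough the $O(\delta^2)$ remainder disturbs neither the location nor the first-order value of the maximum.
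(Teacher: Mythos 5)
Your proof is correct and takes essentially the same route as the paper: restrict to the divergent band $\left(\frac{\omega}{C_p},\frac{\omega}{C_s}\right)$, expand $\max|r_\pm|$ to first order in $\delta$ there --- your profile $g(k)$ is exactly the paper's coefficient $\frac{2\omega^2\lambda_2\bar{\lambda}_1^2}{C_p^2(k^4+\bar{\lambda}_1^2\lambda_2^2)}$ from \eqref{RhoMaxAsympt} rewritten in terms of $C_p$ and $C_s$ --- and then maximize that coefficient over $k$. The only difference is one of detail, not of method: you make explicit the derivation of the first-order expansion (via the invariants of $R_\delta^1$) and the calculus of the maximization at $t^\star$, which the paper compresses into the single sentence ``Computing the maximum of \eqref{RhoMaxAsympt}, we find the result.''
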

\begin{proof}
According to Corollary \ref{ClassicalSchwarzTheorem}, the
  maximum of the convergence factor is attained on the interval where
  the algorithms is divergent, $k
    \in\left(\frac{\omega}{C_p},\frac{\omega}{C_s}\right)$, and this
  quantity is larger than one. For a fixed $k$, the convergence
  factor in that region is for overlap $\delta$ small given by
\begin{equation}\label{RhoMaxAsympt}
 \rho_{cla}(k,\omega,C_p,C_s,\delta) = 1 + \dfrac{2 \omega^2 \lambda_2 \bar{\lambda}_1^2 }{C_p^2\left(k^4+\bar{\lambda}_1^2 \lambda_2^2\right)}\delta+\mathcal{O}\left(\delta^2\right)\in\mathbb{R_+^*}.
\end{equation}
Computing the maximum of \eqref{RhoMaxAsympt}, we find the
result of the theorem.
\end{proof}

\section{Numerical experiments}\label{sec:3}

We illustrate now the divergence of the classical Schwarz
  algorithm with a numerical experiment. We choose the same
parameters $C_p=1$, $C_s=0.5$, $\rho=1$ and overlap
$\delta=0.1$ as in Figure \ref{Fig1}. We discretize the
time-harmonic Navier equations using $P1$ finite elements on the
domain $\Omega=(-1,1)\times(0,1)$ and decompose the domain into
two overlapping subdomains $\Omega_1=(-1,2h)\times(0,1)$ and
$\Omega_2=(-2h,1)\times(0,1)$ with $h=\frac{1}{40}$, such that the
  overlap $\delta=0.1=4h$. Our computations are performed with
the open source software Freefem++.  We show in Figure \ref{FigNumExp}
the error in modulus at iteration
\begin{figure}[t]
  \centering
 \includegraphics[width=0.49\textwidth,clip]{./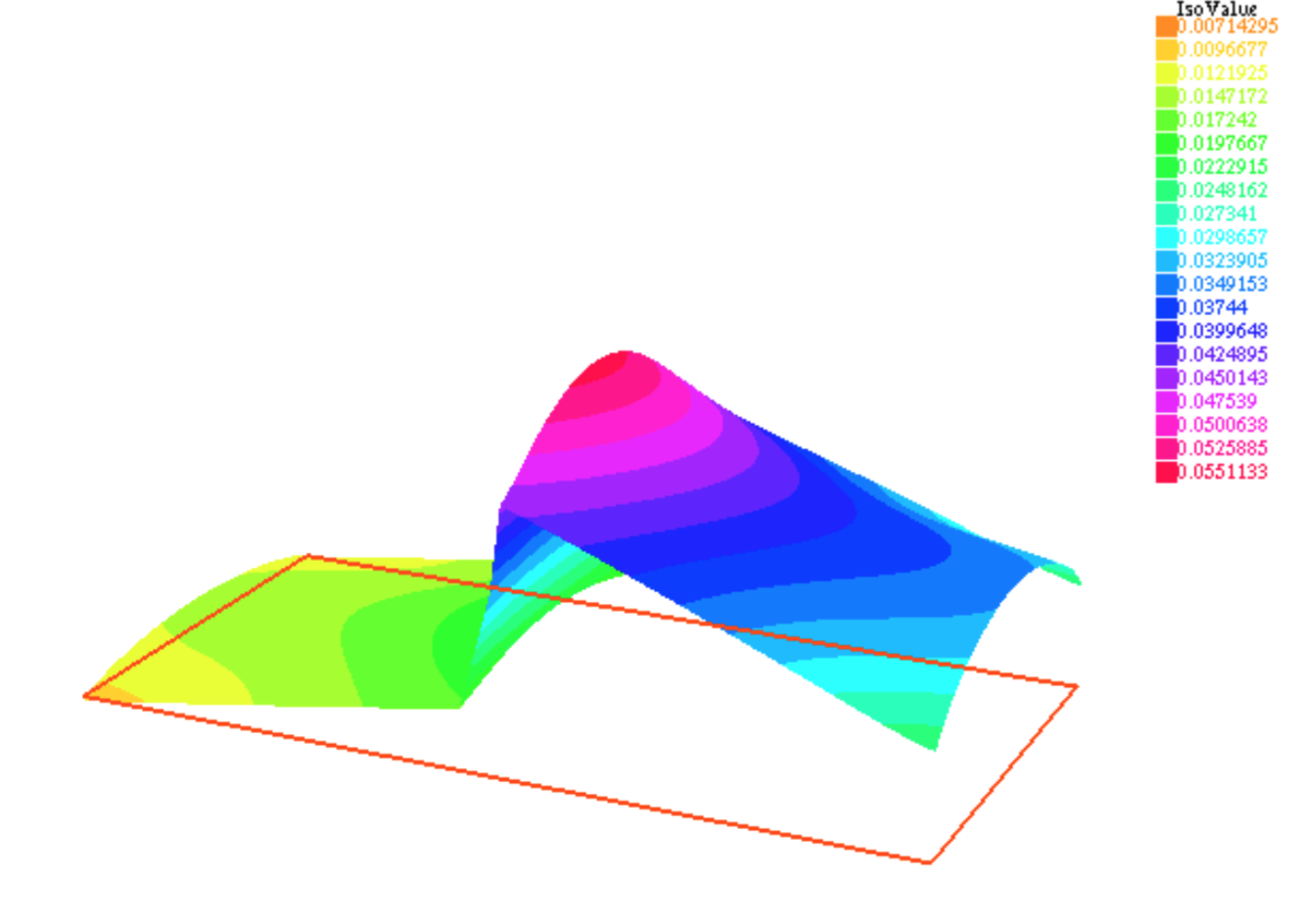}
  \includegraphics[width=0.49\textwidth,clip]{./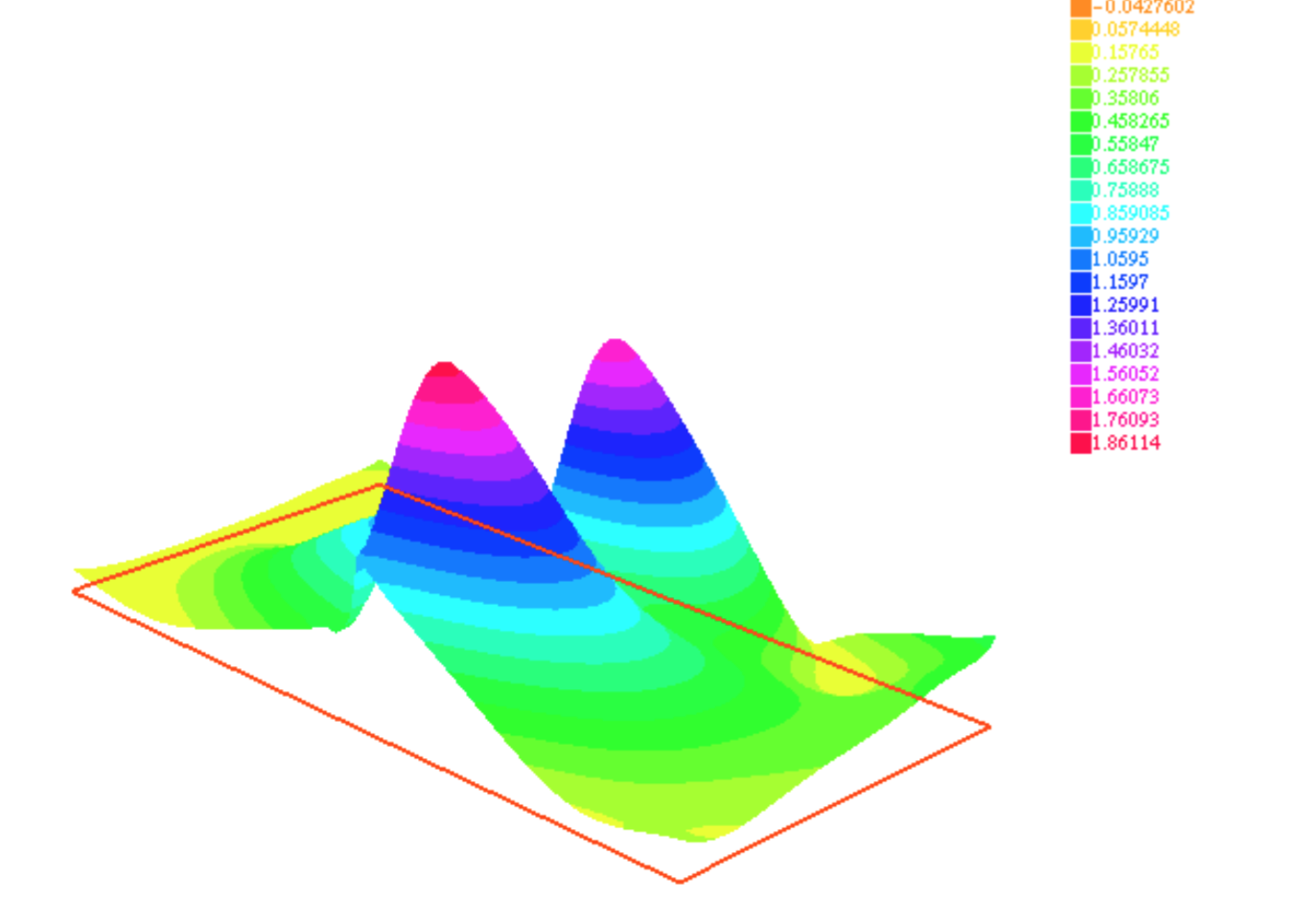}
  \caption{Error in modulus at iteration 25 of the classical Schwarz
    method with 2 subdomains, where one can clearly identify the
    dominant mode in the error: Left: $\omega=1$. Right: $\omega=5$.}
  \label{FigNumExp}
\end{figure}
$25$ of the classical Schwarz method, on the left for $\omega=1$ and
on the right for $\omega=5$. In the first case, $\omega=1$, we
  observe very slow convergence, the error decreases from $7.89e-1$ to
  $5e-2$ after 25 iterations.  This can be understood as follows: the
  lowest frequency along the interface on our domain $\Omega$ is
  $k=\pi$, which lies outside the interval
$\left[\frac{\omega}{C_p},\frac{\omega}{C_s}\right] = [1,2]$ of
frequencies on which the method is divergent. The method thus
  converges, all frequencies lie in the convergent zone in the plot in
  Figure \ref{Fig1} on the left where $\rho_{cla}<1$. The most slowly convergent mode is
  $|\sin(ky)|$ with $k=\pi$, which is clearly visible in Figure
  \ref{FigNumExp} on the left. This is different for $\omega=5$,
  where we see in Figure \ref{FigNumExp} on the right the dominant
  growing mode. The interval of frequencies on which the method is
divergent is given by
$\left[\frac{\omega}{C_p},\frac{\omega}{C_s}\right] = [5,10]$, and we
clearly can identify in Figure \ref{FigNumExp} on the right a mode
  with two bumps along the interface, which corresponds to the mode
$|\sin(ky)|$ along the interface for $k=2\pi\approx 6$, which is the fastest diverging mode according to the analytical
  result shown in Figure \ref{Fig1} on the right.

One might wonder if the classical Schwarz method is nevertheless
a good preconditioner for a Krylov method, which can happen also
  for divergent stationary methods, like for example the Additive
Schwarz Method applied to the Laplace problem, which is also not
  convergent as an iterative method \cite{efstathiou2003restricted},
  but useful as a preconditioner. To investigate this, it suffices to
  plot the spectrum of the identity matrix minus the iteration
operator in the complex plane, which corresponds to the
preconditioned systems one would like to solve.  We see in Figure
  \ref{Fig2} that the part of the spectrum that leads to a
contraction
\begin{figure}[t]
\centering
\includegraphics[width=0.49\textwidth]{./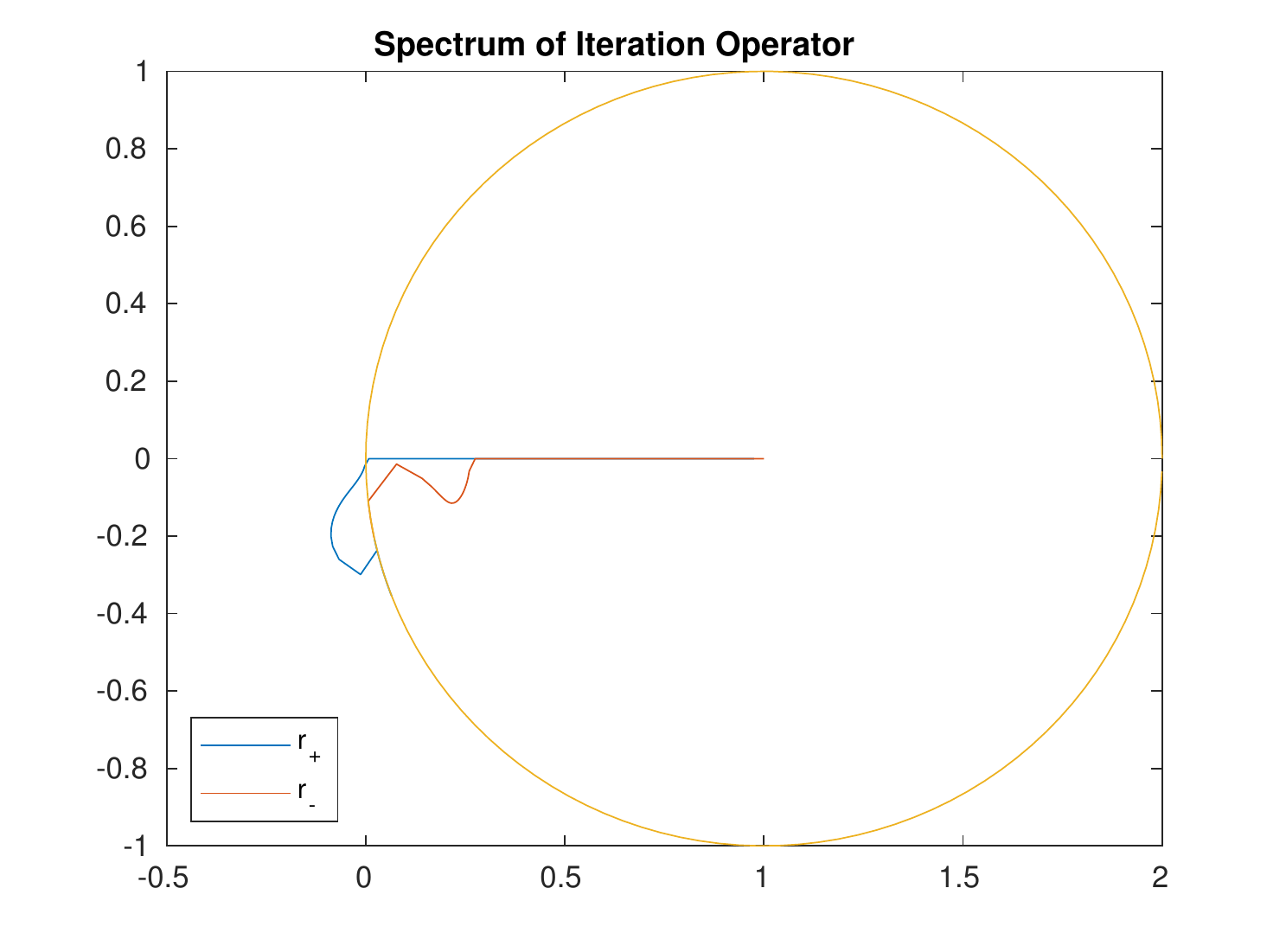}
\includegraphics[width=0.42\textwidth]{./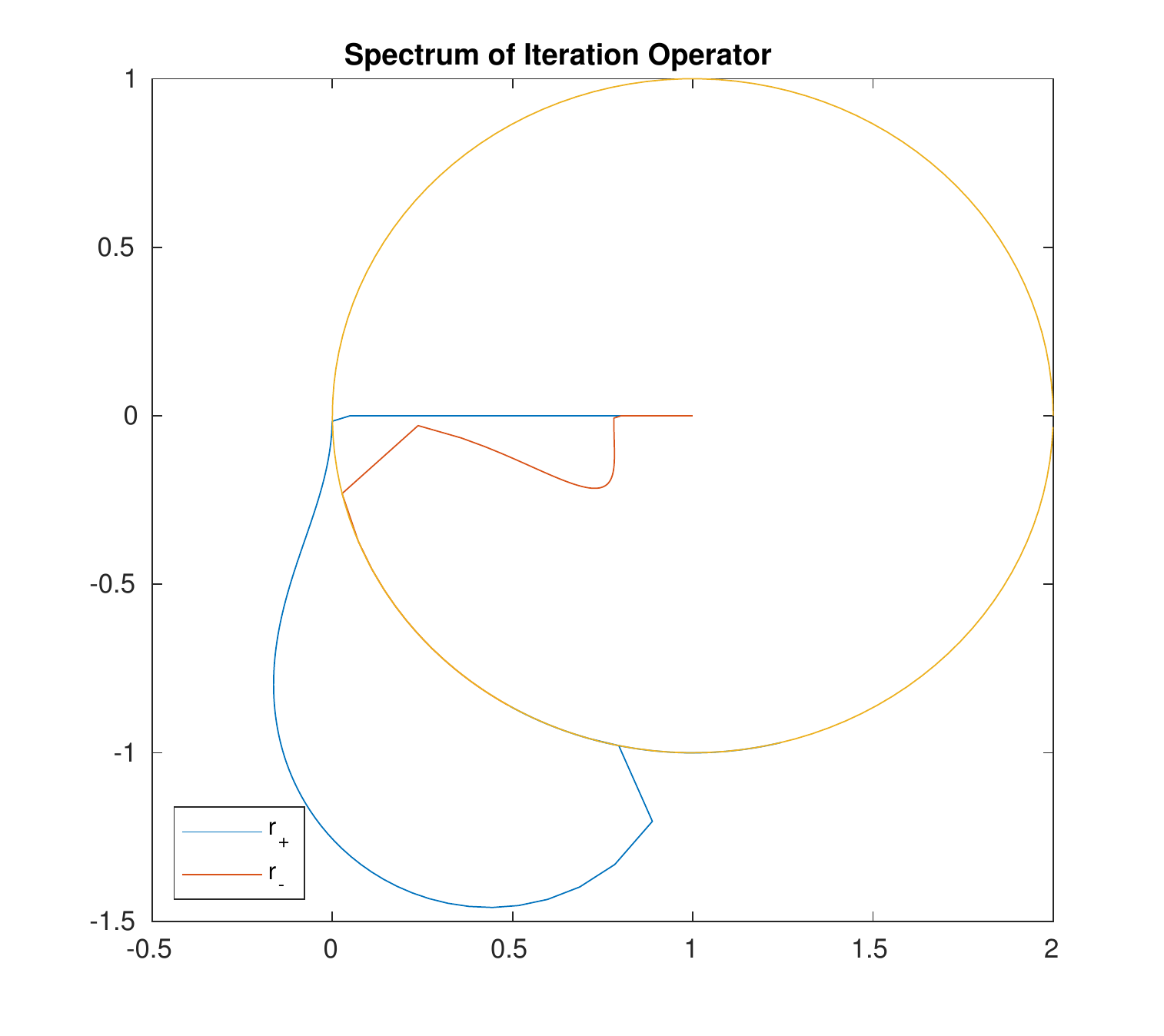}
  \caption{Spectrum of the iteration operator for the same example as
    in Figure \ref{Fig1}, together with a unit circle centered around
    the point $(1,0)$.  Left: $\omega=1$. Right: $\omega=5$}
  \label{Fig2}
\end{figure}
factor $\rho_{cla}$ with modulus bigger than one lies unfortunately
close to zero in the complex plane, and that is where the residual
polynomial of the Krylov method must equal one. Therefore we can infer
that the classical Schwarz method will also not work well as a
preconditioner. This is also confirmed by the numerical results
shown in Figure \ref{Fig3},
\begin{figure}
  \centering
  \includegraphics[width=0.45\textwidth]{./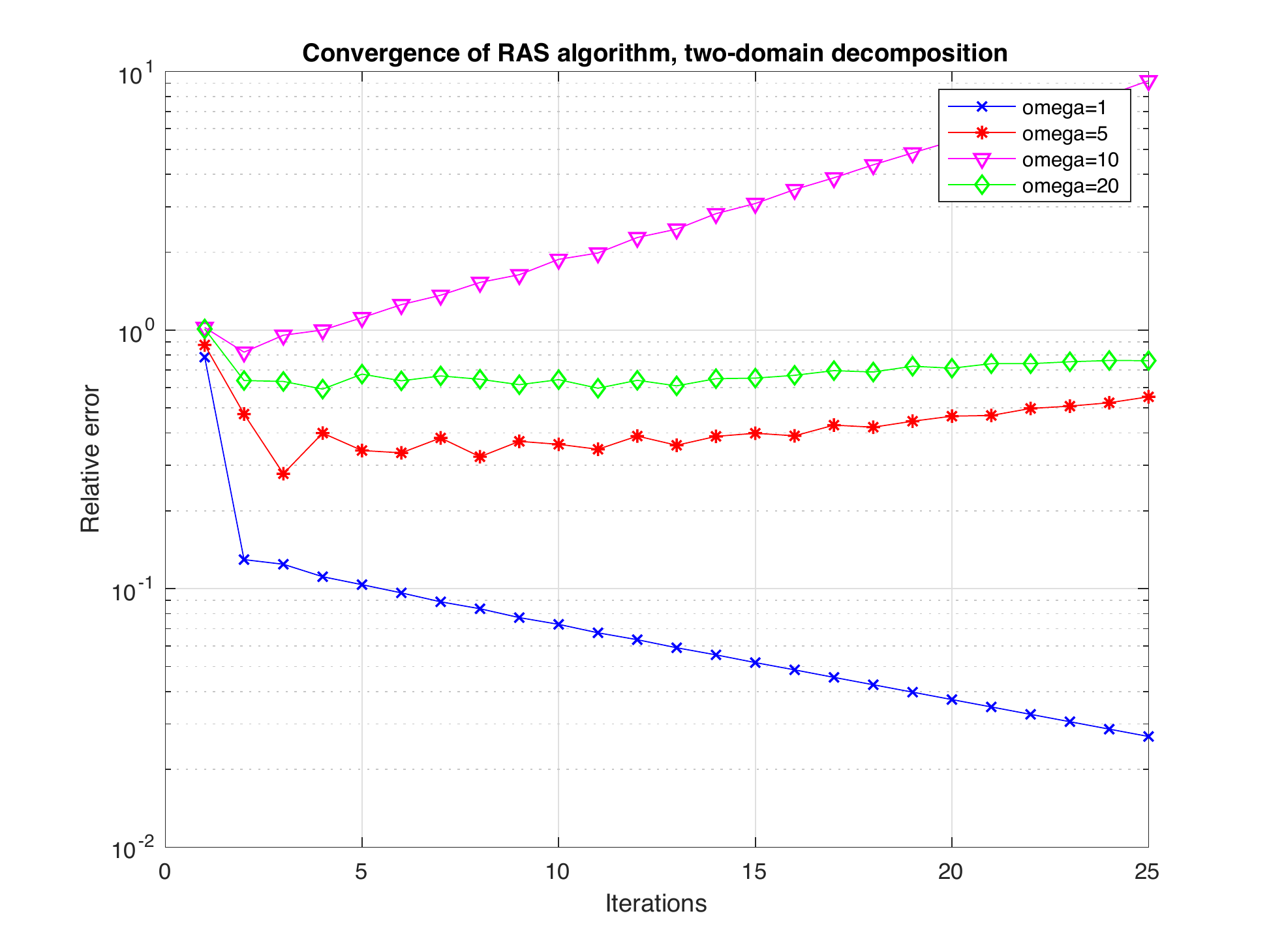}
  \includegraphics[width=0.45\textwidth]{./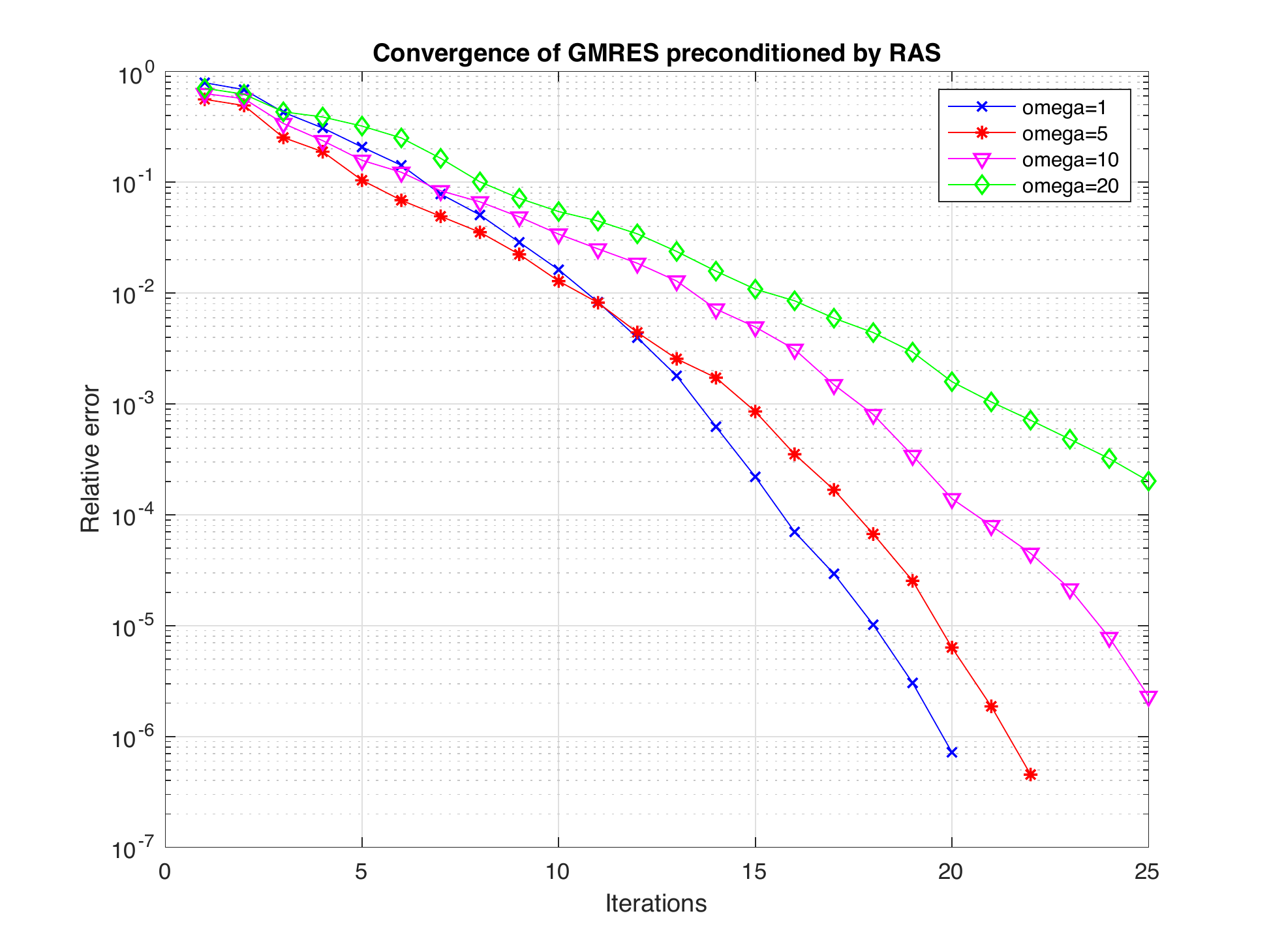}
  \caption{Convergence history for RAS and GMRES preconditioned by RAS for different values of $\omega$}
  \label{Fig3}
\end{figure}
where we used first the classical Schwarz method as a solver and then as preconditioner for GMRES. We see that GMRES now makes the method converge, but convergence depends strongly on $\omega$ and slows down when $\omega$ grows.

\section{Conclusion} \label{sec:4}

We proved that the classical Schwarz method with overlap
applied to the time harmonic Navier equations cannot be used as
an iterative solver, since it is not convergent in general. This
is even worse than for the Helmholtz or time harmonic Maxwell's
equations, for which the classical Schwarz algorithm also
  stagnates for all propagative modes, but at least is not
  divergent. We then showed that our analysis clearly identifies the
  problematic error modes in a numerical experiment. Using the
  classical Schwarz method as a preconditioner for GMRES then
  leads to a convergent method, which however is strongly dependent on
  the time-harmonic frequency parameter $\omega$. We are currently
  studying better transmission conditions between subdomains, which
  will lead to optimized Schwarz methods for the time harmonic Navier
  equation.

\end{document}